\newtheorem{theorem}{Theorem}   
\newtheorem{lemma}[theorem]{Lemma}
\newtheorem{proposition}[theorem]{Proposition}
\theoremstyle{definition}
\definecolor{MyDarkBlue}{rgb}{0,0.08,0.60}
\newcommand{\Spe}{{\rm Spec}}
\renewcommand{\O}{{\mathcal{O}}}
\newcommand{\Z}{{\mathbb{Z}}}
\title{A geometric interpretation of  the  Hasse-Arf theorem}
\date{\today}
\author{Aristides Kontogeorgis}
\address{Department of Mathematics, University of Athens\\
Panepistimioupolis, 15784 Athens, Greece
}
\email{kontogar@math.uoa.gr}
\begin{document}
\bibliographystyle{amsplain}

\begin{abstract}
We give an a  geometric interpretation of the Hasse-Arf theorem for function fields using the 
recently proved Oort conjecture. 
\end{abstract} 

\thanks{{\bf keywords:} Automorphisms, Curves, Local fields, Hasse-Arf, Oort Conjecture. 
 {\bf AMS subject classification} 14H37,11G20,11S23}

\maketitle 
\section{Introduction}
The Hasse-Arf theorem (see \cite[IV.3]{SeL}) 
controls the jumps of the ramification filtration for abelian groups.  
Aim of this short note  is to give an intuitive geometric argument,  why
the Hasse-Arf theorem is true. 
Unfortunately our approach does not 
provide a new proof of the Hasse-Arf theorem; It is based on the 
recently proved Oort conjecture for cyclic groups, and the known proof of 
the Oort conjecture is heavily based on the Hasse-Arf theorem. 
However, we have enjoyed  this argument  and  we believe that it reveals the nature
of the Hasse-Arf theorem. We would like to mention that the author has spent a 
lot of time trying to prove the Hasse-Arf theorem with the methods 
of the article \cite{KaranKontSemi} but without success.

After posting a first version on the arXiv, M. Mattignon informed the author that 
essentially  the same   observation was made in his article \cite[section 6 p.16]{MatignonGreen98} jointly written by B. Green.  He also  explained that  the proof of Sen's \cite{Sen69}  theorem by Lubin 
\cite{Lubin95}, implies the truth of the Hasse-Arf theorem and is really a geometric proof
of the Hasse-Arf theorem. We  would like to thank him for his response. 
We  would like to also thank M. Romagny for his comments.

Let $\O=k[[t]]$ be a complete local ring  over an algebraically closed field $k$ of
characteristic $p>0$.
This ring $\O$ is equipped with a valuation $v$ and a local uniformiser $t$. 
Suppose that a finite $p$-group $G$ acts on $\O$. Then a ramification 
filtration is defined by 
\[
 G_i:=\{ \sigma \in G : v(\sigma(t)-t) \geq i+1\}.
\]
The Hasse Arf theorem reduces \cite[IV.3 exer. 3]{SeL} to the following statement 
for the cyclic case:
\begin{theorem}
 Let $G$ be a cyclic $p$-group of order $p^n$. The ramification filtration
 is given by 
 \[
  G_0=G_1=\cdots=G_{j_0}\gneqq G_{j_0+1}=\cdots=G_{j_1}\gneqq G_{j_1+1} 
  =\cdots= G_{j_{n-1}} \gneqq \{1\},
 \]
i.e. the jumps of the ramification filtration  appear at the integers $j_0,\ldots,j_{n-1}$. 
Then 
\[
 j_k=i_0 + i_1 p +i_2 p^2 +\cdots i_k p^k.
\]
\end{theorem}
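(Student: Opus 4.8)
The plan is to transport the problem to characteristic zero by lifting the action, where the ramification becomes tame and the jumps turn into honest (integer) branch data, and then to recover the congruences hidden in the formula $j_k=i_0+i_1p+\cdots+i_kp^k$ --- equivalently $p^{k}\mid(j_k-j_{k-1})$ --- from the way the branch points collide under specialisation. Since $G$ is cyclic, the (now proven) Oort conjecture yields a lift of the $G$-action on $\O=k[[t]]$ to a $G$-action on $R[[T]]$, where $R$ is a complete discrete valuation ring of characteristic $0$ with residue field $k$, say $R=W(k)$. Geometrically this is a $\Z/p^n$-cover $\widetilde X\to D:=\mathrm{Spf}(R[[T]])$ of the formal disk whose special fibre is the given cover.

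First I would pass to the generic fibre $\widetilde X_\eta\to D_\eta$. This is a $\Z/p^n$-cover of a $p$-adic open disk which is \'etale outside finitely many branch points, all of which specialise to the unique ramification point of the special fibre. Because $D_\eta$ lives in characteristic $0$, the ramification over every branch point $x$ is tame: the inertia $I_x$ is one of the subgroups in the chain $1=G_{(0)}\subset G_{(1)}\subset\cdots\subset G_{(n)}=G$, where $G_{(m)}=\langle\sigma^{p^{n-m}}\rangle$ has order $p^m$ for a fixed generator $\sigma$ of $G$, and locally the cover is $z\mapsto z^{|I_x|}$. I then factor the cover into the tower
\[
 \widetilde X_\eta=\widetilde X_{(n)}\to \widetilde X_{(n-1)}\to\cdots\to \widetilde X_{(1)}\to \widetilde X_{(0)}=D_\eta,\qquad \widetilde X_{(m)}:=\widetilde X/G_{(n-m)},
\]
in which every step $\widetilde X_{(m+1)}\to \widetilde X_{(m)}$ is a $\Z/p$-cover and $\widetilde X_{(m)}\to D_\eta$ is Galois with group $G/G_{(n-m)}\cong\Z/p^{m}$.

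The geometric heart of the argument is an orbit count. Since $G$ is abelian, $G_{(n-m-1)}$ is normal in $G$, so the branch locus on $\widetilde X_{(m)}$ of the layer $\widetilde X_{(m+1)}\to \widetilde X_{(m)}$ is stable under $\Gal(\widetilde X_{(m)}/D_\eta)\cong\Z/p^{m}$. Over the \'etale locus of $\widetilde X_{(m)}\to D_\eta$ this group acts freely, so the \emph{new} branch points introduced at the $(m+1)$-st layer fall into free orbits of size $p^{m}$; in particular their number is divisible by $p^{m}$. This is exactly the source of the growing powers of $p$: the top layer contributes orbits of size $p^{n-1}$, the next $p^{n-2}$, and so on down to the tame bottom layer, which is unconstrained.

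The remaining, and hardest, step is to convert this divisibility of branch-point counts into the divisibility $p^{m}\mid(j_m-j_{m-1})$ of jump increments. For this I would use the deformation theory of the different along the lift (the conductor computations of Green--Matignon and Henrio), which express the characteristic-$p$ conductor of each $\Z/p$-layer --- and hence, after the Herbrand bookkeeping relating the intrinsic numbering of the layer $\widetilde X_{(m+1)}\to\widetilde X_{(m)}$ to the numbering of the full cover $G$, the increment $j_m-j_{m-1}$ --- in terms of the number and $p$-adic radii of the branch points that coalesce at the origin. Matching the two counts then forces $p^{m}\mid(j_m-j_{m-1})$, which is the assertion of the theorem. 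The main obstacle is precisely this numerical dictionary between the char-$0$ branch configuration and the char-$p$ ramification filtration: it requires a careful analysis of the semistable model (the Hurwitz tree) and control of the distances between the branch points, and the Herbrand translation between layers must be handled without circularity. I should stress, as in the introduction, that this does not yield a new proof of Hasse--Arf, since the Oort conjecture invoked in the first step is itself proved using the Hasse--Arf theorem.
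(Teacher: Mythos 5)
Your first half coincides with the paper's: lift the (Harbater--Katz--Gabber compactified) action to characteristic zero via the Oort conjecture, note that the generic fibre is tamely ramified so each branch point there has cyclic stabilizer $\Z/p^{n-k}\Z$ for some $k$, and count: since $G$ is abelian, the branch points with a given stabilizer form a $G$-stable set partitioned into free orbits of size $p^k$, so their number is $p^k i_k$. Up to this point you and the paper agree.

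The genuine gap is in the step you yourself flag as ``the remaining, and hardest, step.'' You propose to recover the jumps $j_k$ from these branch-point counts by a layer-by-layer analysis of $\Z/p$-subcovers, using conductor computations of Green--Matignon/Henrio, Hurwitz trees, the $p$-adic radii of the colliding branch points, and a Herbrand translation between the filtration of each layer and that of the full tower. None of this is carried out, and it is exactly where the difficulty (and the risk of circularity) lives: the different identity coming from good reduction controls only the weighted sum $\sum_k (p^{n-k}-p^{n-k-1})(j_k+1)$, and disentangling the individual $j_k$ from layerwise conductors is the bookkeeping you defer. The paper bypasses all of this with one observation your proposal is missing: the Artin representation is compatible with specialization \emph{element by element}, not merely at $\sigma=1$. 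Concretely, $i_{G,P}(\sigma)$ is the intersection multiplicity of the diagonal with the graph $\Gamma_\sigma$ in $\mathcal{X}\times_{\Spe A}\mathcal{X}$, and since these divisors restricted to the special and generic fibres are algebraically equivalent, one gets $\mathrm{ar}_P(\sigma)=\sum_i \mathrm{ar}_{P_i}(\sigma)$ for every $\sigma\neq 1$ (Proposition \ref{bertin-gen}). Applying this to a single $\sigma\in G_{j_k}\setminus G_{j_{k+1}}$, the left-hand side equals $j_k$ (in the paper's normalization), while the right-hand side is the tame fixed-point count of $\sigma$ on the generic fibre, namely $|S_0|+\cdots+|S_k|=i_0+p\,i_1+\cdots+p^k i_k$. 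This single identity replaces your entire final step; no semistable model, no control of radii, and no Herbrand functions are needed.
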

Notice that since $G$ is assumed to be a $p$-group 
$G_1=G$.
The Harbater-Katz-Gabber compactification theorem asserts that there is a Galois cover
$X\rightarrow \mathbb{P}^1$ ramified only at one point $P$ of $X$ with Galois group 
$G=\mathrm{Gal}(X/\mathbb{P}^1)=G_1$ such that $G_1(P)=G_1$ and the action of 
$G_1(P)$ on the completed local ring $\O_{X,P}$ coincides with the original 
action of $G_1$ on $\O$.

The Oort conjecture (now a theorem proved by  A.Obus, S. Wewers  \cite{ObusWewers} and F. Pop \cite{PopOort}) states:
\begin{theorem}
Let $X$ be a projective nonsingular curve, defined over an 
algebraically closed field $k$ of characteristic $p>0$, acted on by a cyclic group $G$.
 There is a proper smooth family of curves $\mathcal{X} \rightarrow \Spe A$,
 where $A$ is a local ring with maximal ideal $m$ so that $A/m=k$ and 
 the special fibre of $\mathcal{X}_m=\mathcal{X} \times_{\Spe A} \Spe k$ is the 
 original curve $X$ and the generic fibre $\mathcal{X}_0$ is a curve defined over 
 a field of characteristic $0$. 
\end{theorem}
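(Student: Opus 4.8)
The plan is to reduce this global statement to a purely local lifting problem, and then to solve the local problem for cyclic $p$-groups by induction on the order, with the ramification data controlled exactly by the numerics of the first theorem. First I would pass from the curve $X$ to the $G$-cover $X \to X/G$ and study its deformation functor over local Artinian $k$-algebras. By the local-global principle for deformations of covers (Bertin--M\'ezard, Garuti, Chinburg--Guralnick--Harbater), the deformations away from the branch points are unobstructed, so the global obstruction space splits as a direct sum of the local obstruction spaces attached to the completed local rings $\O_{X,P} \cong k[[t]]$ at the ramified points $P$. A formal lift therefore exists as soon as each local $G$-action lifts to an action on $R[[T]]$ for a suitable complete discrete valuation ring $R$ of characteristic $0$ with residue field $k$; Grothendieck's existence theorem together with algebraization then promotes the formal family to an actual proper smooth family $\mathcal{X} \to \Spe A$ with the required special and generic fibres.

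Next I would reduce to cyclic $p$-groups. Since $G$ is cyclic I can write $G = P \times C$ with $P$ its $p$-Sylow subgroup and $C$ of order prime to $p$; the tame factor $C$ lifts by the classical theory of tamely ramified covers, so the problem reduces to $G = \Z/p^n\Z$, which is the genuinely hard case. Here I would argue by induction on $n$. The base case $n = 1$, i.e. lifting $\Z/p\Z$-actions, is the theorem of Oort--Sekiguchi--Suwa, obtained through the explicit Kummer--Artin--Schreier--Witt theory.

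For the inductive step I would consider the unique index-$p$ subextension together with the lower ramification filtration $G_0 = \cdots = G_{j_0} \supsetneq \cdots \supsetneq G_{j_{n-1}} \supsetneq \{1\}$ of the first theorem. The decisive input is that the jumps satisfy $j_k - j_{k-1} = i_k p^k$, that is $j_k \equiv j_{k-1} \pmod{p^k}$, which is precisely the Hasse--Arf congruence. Following Obus--Wewers I would translate the existence of a lift with prescribed invariants into the existence of a meromorphic differential on the boundary annulus with prescribed poles and residues, organised into a Hurwitz tree whose valuation-theoretic compatibilities are exactly these congruences, with Kato's refined Swan conductor packaging the datum; alternatively one could follow Pop's rigid-analytic patching argument, assembling the lift annulus by annulus.

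The hard part will be this last step: showing that the obstruction to the inductive lift genuinely vanishes, i.e. that a differential datum (Hurwitz tree) realising the required ramification invariants exists. This is where the dependence on the Hasse--Arf theorem is unavoidable, since the numerical constraints that make a consistent lift possible \emph{are} the Hasse--Arf congruences. For this reason, as already noted in the introduction, the present geometric reading of Hasse--Arf via the Oort conjecture cannot be turned into an independent proof.
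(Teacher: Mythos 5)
This statement is the Oort conjecture, and the paper does not prove it at all: it is imported as an external theorem, with the proof attributed to Obus--Wewers and Pop. So there is no internal argument to compare against; what you have written is a road map of the proof from the literature, and as such it is broadly accurate. The architecture you describe is the right one: the local-global principle of Bertin--M\'ezard/Garuti/Chinburg--Guralnick--Harbater reduces the global lifting of the $G$-cover $X\rightarrow X/G$ to the local lifting problem for $G$ acting on $k[[t]]$, the tame factor of a cyclic group splits off harmlessly, the case $\Z/p\Z$ is Oort--Sekiguchi--Suwa, and the induction on $n$ for $\Z/p^n\Z$ is carried by differential/Hurwitz-tree data measured by Kato's refined Swan conductor. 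One small correction: Pop's contribution is not an independent ``annulus-by-annulus patching'' proof; his key move is an equal-characteristic-$p$ deformation of the local action that reduces the general case to covers with ``no essential ramification,'' which is the case Obus and Wewers actually settle. Also note that the statement as literally printed (a smooth proper family with special fibre $X$) is trivially true for any curve since $H^2(X,\mathcal{T}_X)=0$; the content, which you correctly supply, is that the family carries a $G$-action restricting to the given one on the special fibre.

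The genuine gap, which you yourself flag, is that everything is concentrated in the step you defer: proving that a Hurwitz tree (equivalently, the required differential data on the boundary annuli) with the prescribed depth and conductor invariants actually exists. That existence statement is the theorem; without it your argument establishes only that the problem reduces to it. For the purposes of this paper the deferral is harmless --- the author likewise treats the theorem as a black box, and both you and he note the resulting circularity: the Obus--Wewers construction uses the Hasse--Arf congruences $j_k\equiv j_{k-1} \pmod{p^k}$ as input, so the geometric interpretation of Hasse--Arf derived from it cannot be promoted to an independent proof.
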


We will apply the Oort conjecture on the Harbater-Katz-Gabber compactification $X$ coming from 
the local action of $G=\Z/p^n\Z$ on $\O$. We construct a relative notion of horizontal 
ramification divisor. This divisor intersects the special fibre on a single point $P$ and 
the generic fibre at a set with $P_1,\ldots,P_s$ points. 
The Hasse-Arf has the following 
geometric interpretation:
\begin{theorem}
 The numbers $i_k$ are numbers of different  orbits of the action of the group $G$ on the 
 ramified
 points in the generic fibres with size $p^k$. 
\end{theorem}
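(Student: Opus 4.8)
The plan is to take the Harbater-Katz-Gabber cover $X \to \mathbb{P}^1$ realizing the local action of $G = \Z/p^n\Z$ on $\O$, lift it via the Oort conjecture to a smooth family $\mathcal{X} \to \Spe A$, and then track how the single wild ramification point $P$ on the special fibre degenerates into a configuration of points on the generic fibre $\mathcal{X}_0$, which is a curve in characteristic $0$. The key input is that ramification in characteristic $0$ is necessarily \emph{tame}, so on the generic fibre the action of $G$ ramifies only in the tame sense; a single point of the special fibre carrying the full wild inertia $G$ must therefore split into several points, each carrying a cyclic inertia group of order a power of $p$, and the way this splitting is organized into $G$-orbits is what I expect to encode the integers $i_k$.

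\emph{First} I would construct the relative horizontal ramification divisor $\mathcal{R} \subset \mathcal{X}$ alluded to in the introduction: using that $\mathcal{X} \to \Spe A$ is smooth and that $G$ acts on the whole family, I would define $\mathcal{R}$ as (the horizontal part of) the different or the locus where the $G$-action is not free, and argue that $\mathcal{R}$ is finite and flat over $\Spe A$ so that it has a well-defined degree which is constant across fibres. \emph{Second}, I would compute the degree of $\mathcal{R}$ restricted to the special fibre in terms of the ramification filtration: by the standard formula for the different of a local wildly ramified extension, the length contributed at $P$ is $\sum_{i\geq 0}(|G_i|-1) = \sum_{k=0}^{n-1}(j_k - j_{k-1})(p^{n-k}-1)$ (with $j_{-1}=-1$), so the special-fibre degree is pinned down by the jumps $j_0,\ldots,j_{n-1}$ and hence by the $i_k$.

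\emph{Third} I would analyze the generic fibre. Since $\mathcal{X}_0$ lives in characteristic $0$, each point $P_\ell$ of $\mathcal{R}\cap\mathcal{X}_0$ is tamely ramified, with inertia group a cyclic subgroup $G_{(m_\ell)}\subseteq G$ of order $p^{m_\ell}$ contributing different exponent $p^{m_\ell}-1$; grouping the $P_\ell$ into $G$-orbits, an orbit whose points have inertia of order $p^k$ has size $[G:G_{(k)}] = p^{n-k}$ and contributes $p^{n-k}(p^k-1)$ to the degree. If $i_k$ denotes the number of such orbits of orbit-size $p^{n-k}$ (equivalently, orbits on which the points have inertia $p^k$ — here I would fix the indexing convention so orbit size $p^k$ corresponds to the claim's statement), then matching the total generic degree against the special degree via the constancy of $\deg \mathcal{R}$ gives one linear relation $\sum_k i_k\, p^{n-k}(p^k-1) = \sum_{k}(j_k-j_{k-1})(p^{n-k}-1)$. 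The decisive point is that this single degree equation is \emph{not} enough: I would need to match contributions filtration-level by filtration-level, using that the subfamily $\mathcal{X}/G_{(k+1)} \to \mathcal{X}/G$ degenerates the subcover realizing $G/G_{(k+1)}$ and carries its own horizontal different, so that the partial sums of differents coincide on both fibres. Feeding the expected $j_k = i_0 + i_1 p + \cdots + i_k p^k$ and telescoping should then force exactly $i_k$ orbits of the relevant size at each level.

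The main obstacle, and the crux of the whole argument, will be \emph{Third} — justifying that the orbit-counts on the generic fibre are governed by the $p$-adic digits $i_k$ of the jumps rather than merely satisfying the aggregate degree identity. Concretely I expect the difficulty to be controlling how the wild point $P$ \emph{equidistributes} under the lift: I must show that the inertia groups appearing on $\mathcal{X}_0$ are exactly the subgroups $p^k\Z/p^n\Z$ and that the number of generic points with inertia of order $p^k$ is precisely $i_k$ times $p^{n-k}$. This is where I would lean hardest on the Oort-conjecture machinery: the explicit deformation comes with a comparison between the characteristic-$p$ conductor and the characteristic-$0$ ramification data (the Hasse-Arf/conductor-discriminant bookkeeping built into the lifting), and it is precisely this comparison that supplies the per-level matching rather than just the total degree. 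Establishing that level-by-level matching cleanly, without circularly re-invoking the Hasse-Arf integrality it is meant to illuminate, is the delicate part.
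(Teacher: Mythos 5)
Your framework coincides with the paper's: Harbater--Katz--Gabber compactification, the Oort lifting, a horizontal ramification divisor flat over the base, and a comparison of its special and generic intersections. But there is a genuine gap at exactly the place you flag as the crux, and the missing idea is concrete: you only transport the \emph{aggregate} invariant (the degree of the different, i.e.\ the value of the Artin representation at $\sigma=1$) from the special to the generic fibre, which, as you yourself observe, yields a single linear relation and cannot separate the $i_k$. The paper instead transports the Artin representation \emph{element by element}: for each individual $\sigma\in G$ the quantity $i_{G,P}(\sigma)$ is the intersection multiplicity of the diagonal $\Delta$ with the graph $\Gamma_\sigma$ in $\mathcal{X}\times_{\Spe A}\mathcal{X}$, and since $\Delta$ and $\Gamma_\sigma$ restrict to algebraically equivalent divisors on the special and generic fibres, one obtains the character identity $\mathrm{ar}_P(\sigma)=\sum_i\mathrm{ar}_{P_i}(\sigma)$ of Proposition \ref{bertin-gen} for every $\sigma$ separately, not just for $\sigma=1$. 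On the characteristic-$0$ fibre each fixed point of a nontrivial $\sigma$ counts with multiplicity exactly $1$ (tameness), so the right-hand side is simply the number of generic ramification points fixed by $\sigma$; taking $\sigma$ in the layer $G_{j_k}\setminus G_{j_{k+1}}$, which fixes precisely $S_0\cup\cdots\cup S_k$ of cardinality $i_0+pi_1+\cdots+p^ki_k$, gives $j_k=i_0+pi_1+\cdots+p^ki_k$ at once. This is the ``level-by-level matching'' you were looking for, and it involves no circular appeal to Hasse--Arf: it is intersection theory on the relative surface plus tameness in characteristic $0$.

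Your fallback --- recovering the missing relations from the differents of intermediate families --- could in principle be completed, but not in the form you state it. You propose the quotient covers $\mathcal{X}/G_{(k+1)}\to\mathcal{X}/G$, whose ramification filtrations in the lower numbering are governed by Herbrand's theorem and the $\varphi$-functions; extracting jumps from those differents is precisely the upper-numbering bookkeeping that Hasse--Arf concerns, so the circularity you worry about is a real danger along that route. The workable variant is the chain of subcovers $\mathcal{X}\to\mathcal{X}/H$ for the subgroups $H\subseteq G$: their differents at $P$ equal $\sum_{\sigma\in H\setminus\{1\}}i_{G,P}(\sigma)$, and differencing along the subgroup chain recovers $i_{G,P}$ on each layer, since the elements of exact order $p^{n-k}$ are exactly the layer $G_{j_k}\setminus G_{j_{k+1}}$. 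But that is the per-element identity in disguise, summed over subgroups; the clean statement to isolate and prove is the character identity above, after which the theorem is a two-line computation.
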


The method of proof reflects the  philosophy of lifting geometrical  objects   
from positive characteristic to characteristic zero, and use the easier 
characteristic zero case. This was one of the main ideas of modular 
representation theory and is also one of the main ideas in defining cohomology theories 
over Witt rings, see for example \cite{SerreMexico}.

\section{Horizontal Ramification Divisors}
Let $P$ be the unique ramification point on the special fibre. 
Let $\sigma \in G_1(P)$, $\sigma \neq 1$, and let $\tilde{\sigma}$ be a lift 
of $\sigma$ in $\mathcal{X}$. The scheme $\mathcal{X}$ is regular at $P$,
 and the completion of $\mathcal{O}_{\mathcal{X},P}$ is isomorphic to the ring $R[[T]]$.
 Weierstrass preparation theorem \cite[prop. VII.6]{BourbakiComm}   implies that:
\[
\tilde{\sigma}(T)-T=g_{\tilde{\sigma}}(T) u_{\tilde{\sigma}}(T),
\]
where $g_{\tilde{\sigma}}(T)$ is a distinguished Weierstrass polynomial 
of degree $m+1$ and $u_{\tilde{\sigma}}(T)$ is a unit in $R[[T]]$.

The polynomial $g_{\tilde{\sigma}}(T)$ gives rise to a horizontal divisor 
that corresponds to the fixed points of $\tilde{\sigma}$. This 
horizontal divisor might not be reducible. 
The branch divisor corresponds to the union of the fixed points of 
any $\sigma \in G_1(P)$. 
Next lemma gives an alternative definition of a horizontal branch divisor for the 
relative curves $\mathcal{X} \rightarrow \mathcal{X}^G$, that works even  when 
$G$ is not a cyclic group.

\begin{lemma} \label{lemmaBRANCH}
Let $\mathcal{X} \rightarrow \Spe A$ be an $A$-curve, admitting a 
fibrewise action of the finite group $G$, where $A$ is a 
Noetherian local ring. 
Let $S=\Spe A$, and $\Omega_{\mathcal{X}/S}$, $\Omega_{\mathcal{Y}/S}$ be
the sheaves of relative differentials of $\mathcal{X}$ over $S$ and 
$\mathcal{Y}$ over $S$, respectively. Let $\pi:\mathcal{X} \rightarrow 
\mathcal{Y}$ be the quotient map.
The sheaf 
\[
\mathcal{L}(-D_{\mathcal{X}/\mathcal{Y}})= \Omega_{\mathcal{X}/S} ^{-1}
\otimes_S \pi^* \Omega_{\mathcal{Y}/S}. 
\]
is the ideal sheaf the horizontal Cartier divisor 
 $D_{\mathcal{X}/\mathcal{Y}}$. The intersection of $D_{\mathcal{X}/\mathcal{Y}}$ with the special and generic fibre 
of $\mathcal{X}$ gives the ordinary branch divisors for curves.
\end{lemma}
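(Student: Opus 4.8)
The plan is to exhibit $\mathcal{L}(-D_{\mathcal{X}/\mathcal{Y}})$ as an invertible ideal sheaf coming from the relative cotangent sequence, and then to recover the fibrewise statement by base change. Since $\mathcal{X}\to S$ and $\mathcal{Y}\to S$ are smooth $A$-curves of relative dimension one, both $\Omega_{\mathcal{X}/S}$ and $\Omega_{\mathcal{Y}/S}$ are invertible, hence so is the pullback $\pi^{*}\Omega_{\mathcal{Y}/S}$ on $\mathcal{X}$. First I would write the cotangent sequence of the tower $\mathcal{X}\xrightarrow{\pi}\mathcal{Y}\to S$,
\[
\pi^{*}\Omega_{\mathcal{Y}/S}\xrightarrow{\ d\pi\ }\Omega_{\mathcal{X}/S}\longrightarrow\Omega_{\mathcal{X}/\mathcal{Y}}\longrightarrow 0,
\]
so that $\Omega_{\mathcal{X}/\mathcal{Y}}$ is the cokernel of the natural map $d\pi$ between line bundles.

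The key step is the injectivity of $d\pi$. A nonzero morphism of invertible sheaves on a reduced scheme is injective as soon as it does not vanish at the generic point of any irreducible component, so it is enough that $\pi$ be generically \'etale along every fibre. This is where the one real hypothesis enters: $\pi$ is the quotient by $G$, and the associated extension of function fields is separable on each fibre (indeed Galois in the Harbater--Katz--Gabber situation to which we apply the lemma), so $d\pi$ is an isomorphism at the relevant generic points. Note that only separability of the quotient is used, not that $G$ is cyclic, which is why the construction works for arbitrary $G$. Tensoring the sequence with $\Omega_{\mathcal{X}/S}^{-1}$ then yields an injection
\[
\Omega_{\mathcal{X}/S}^{-1}\otimes\pi^{*}\Omega_{\mathcal{Y}/S}\hookrightarrow\mathcal{O}_{\mathcal{X}},
\]
whose image is an invertible ideal sheaf; an invertible subsheaf of $\mathcal{O}_{\mathcal{X}}$ is precisely the ideal sheaf of an effective Cartier divisor, which I name $D_{\mathcal{X}/\mathcal{Y}}$, with support equal to $\mathrm{Supp}(\Omega_{\mathcal{X}/\mathcal{Y}})$.

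To prove horizontality I would show that $\Omega_{\mathcal{X}/\mathcal{Y}}$ is a torsion sheaf whose support is finite over $S$: fibrewise separability confines the ramification to finitely many points of each fibre, so no component of $D_{\mathcal{X}/\mathcal{Y}}$ is vertical and $D_{\mathcal{X}/\mathcal{Y}}$ is finite and flat over $S$. For the last assertion I would use that the formation of relative differentials commutes with base change: restricting the sequence to a fibre $\mathcal{X}_{s}\to\mathcal{Y}_{s}$ reproduces the classical sequence $0\to\pi_{s}^{*}\Omega_{\mathcal{Y}_{s}}\to\Omega_{\mathcal{X}_{s}}\to\Omega_{\mathcal{X}_{s}/\mathcal{Y}_{s}}\to 0$, and flatness of $D_{\mathcal{X}/\mathcal{Y}}$ guarantees that restriction of the ideal sheaf stays exact, giving $D_{\mathcal{X}/\mathcal{Y}}\cap\mathcal{X}_{s}=D_{\mathcal{X}_{s}/\mathcal{Y}_{s}}$, the ordinary ramification (different) divisor, whose image under $\pi_{s}$ is the branch divisor. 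Applying this to the special and the generic fibres finishes the argument.

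The hard part will be the horizontality, that is, ruling out vertical components over the special fibre: were the cover to degenerate to an inseparable one in characteristic $p$, the sheaf $\Omega_{\mathcal{X}/\mathcal{Y}}$ would pick up support along the entire special fibre and $D_{\mathcal{X}/\mathcal{Y}}$ would cease to be horizontal. Thus fibrewise separability of $\pi$ is the hypothesis truly doing the work, and it is automatic in the Galois setting of the application; the remaining point requiring care is to check that the quotient $\mathcal{Y}=\mathcal{X}/G$ is again a smooth $A$-curve, so that $\Omega_{\mathcal{Y}/S}$ is genuinely an invertible sheaf.
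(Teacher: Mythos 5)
Your argument is correct, but it takes a genuinely different route from the paper's. You build the divisor directly out of the relative cotangent sequence of $\mathcal{X}\xrightarrow{\pi}\mathcal{Y}\rightarrow S$: injectivity of $d\pi$ (which you correctly reduce to fibrewise generic \'etaleness, i.e.\ separability of the quotient on each fibre) exhibits $\Omega_{\mathcal{X}/S}^{-1}\otimes\pi^{*}\Omega_{\mathcal{Y}/S}$ as an invertible subsheaf of $\mathcal{O}_{\mathcal{X}}$, hence as the ideal sheaf of an effective Cartier divisor, and compatibility of relative differentials with base change then identifies its restriction to each fibre with the classical different divisor. The paper instead verifies the two hypotheses of the Katz--Mazur criterion \cite[Cor.~1.1.5.2]{KaMa} --- flatness of the closed subscheme over $S$ together with the Cartier property on every geometric fibre --- deducing the relevant flatness from smoothness of $\mathcal{X}\rightarrow S$ and $\mathcal{Y}\rightarrow S$ and citing \cite[IV p.~301]{Hartshorne:77} for the identification of the fibres with the usual branch divisors; it never mentions the cotangent sequence or the injectivity of $d\pi$. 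Your version has the advantage of isolating the hypothesis that actually does the work: separability on the special fibre is precisely what prevents $\Omega_{\mathcal{X}/\mathcal{Y}}$ from acquiring a vertical component, a point the paper leaves implicit. You are also more precise in distinguishing the ramification (different) divisor on $\mathcal{X}$ from its image, the branch divisor on $\mathcal{Y}$, which the paper conflates. Both arguments share the same residual informalities --- the smoothness of $\mathcal{Y}=\mathcal{X}/G$ over $S$ (automatic in the application, where $\mathcal{Y}=\mathbb{P}^{1}_{A}$) and the flatness of $D_{\mathcal{X}/\mathcal{Y}}$ over a general Noetherian local base --- but you at least flag them explicitly, so your sketch is, if anything, slightly more careful than the original.
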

\begin{proof}
We will first prove that 
 the above defined divisor $D_{\mathcal{X}/\mathcal{Y}}$ is indeed 
an effective Cartier divisor. According to \cite[Cor. 1.1.5.2]{KaMa}
it is enough to prove that 
\begin{itemize}
\item  $D_{\mathcal{X}/\mathcal{Y}}$ is a closed subscheme which is flat over $S$. 
\item for all geometric points $\Spe k \rightarrow S$ of $S$, the 
closed subscheme $D_{\mathcal{X}/\mathcal{Y}}\otimes_S k$ of $\mathcal{X} \otimes_S k$ is a
Cartier divisor in $\mathcal{X} \otimes _S k/k$. 
\end{itemize}

In our case the special fibre is a nonsingular curve. 
Since the base is a local ring and the special fibre is nonsingular, 
the deformation $\mathcal{X} \rightarrow \Spe A$ is smooth.  
(See the remark after the definition 3.35 p.142 in \cite{LiuBook}).
The smoothness of the curves $\mathcal{X}\rightarrow S$, 
and $\mathcal{Y}\rightarrow S$, implies that the sheaves 
$\Omega_{\mathcal{X}/S}$ and $\Omega_{\mathcal{X}/S}$ are $S$-flat,
\cite[cor. 2.6 p.222]{LiuBook}. 

On the other hand the sheaf $\Omega_{\mathcal{Y},\Spe A}$ is 
by \cite[Prop. 1.1.5.1]{KaMa}  $\O_{\mathcal{Y}}$-flat. 
Therefore, $\pi^*(\Omega_{\mathcal{Y}, \Spe A})$ is $\O_{\mathcal{X}}$-flat
and  $\Spe A$-flat \cite[Prop. 9.2]{Hartshorne:77}.
Finally, observe  that  the intersection with the special and generic 
fibre is the ordinary branch divisor for curves according to 
\cite[IV p.301]{Hartshorne:77}.
\end{proof}


For a curve $X$ and a branch point $P$ of $X$ we will 
denote by  $i_{G,P}$  the order function of the filtration of $G$ at $P$. 
The Artin  representation of the group $G$ is defined 
by $\mathrm{ar}_P(\sigma)=-f_P i_{G,P}(\sigma)$ for $\sigma\neq 1$ and 
$\mathrm{ar}_P(1)= f_P\sum_{\sigma\neq 1} i_{G,P}(\sigma)$ \cite[VI.2]{SeL}.
We are going to use the Artin representation at both the special 
and generic fibre. In the special fibre we always have $f_P=1$ since the
field $k$ is algebraically closed. The field of quotients of $A$ should 
not be algebraically closed therefore a fixed point there might have $f_P \geq 1$.
The integer  $i_{G,P}(\sigma)$ 
is equal to the multiplicity of $P\times P$ in the intersection of 
$\Delta .\Gamma_\sigma$ in the relative $A$-surface 
$\mathcal{X} \times_{\Spe A} \mathcal{X}$, 
where $\Delta$ is the 
diagonal and $\Gamma_\sigma$ is the graph of $\sigma$ \cite[p. 105]{SeL}. 
  
Since the diagonals  $\Delta_0,\Delta_\eta$  and the graphs of $\sigma$ in the special and generic fibres respectively of 
$\mathcal{X}\times_{\Spe A} \mathcal{X}$  are  algebraically equivalent divisors   we have: 
\begin{proposition}\label{bertin-gen}
	Assume that $A$ is an integral domain, and let $\mathcal{X}\rightarrow \Spe A$
	be a deformation of $X$. 
        Let $\bar{P}_i$, $i=1,\cdots,s$ be the  horizontal branch divisors 
        that intersect at the special fibre, at point $P$, and let $P_{i}$ be 
        the corresponding points on the generic fibre. For the Artin 
        representations attached to the points $P,P_{i}$ we have:
        \begin{equation} \label{equali}
        \mathrm{ar}_P(\sigma)=\sum_{i=1}^s \mathrm{ar}_{P_{i}}(\sigma). 
        \end{equation}
\end{proposition}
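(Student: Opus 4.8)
The plan is to read \eqref{equali} as an instance of the invariance of intersection numbers in a proper flat family, and then to translate that invariance into the Artin representation through the sign normalisation $\mathrm{ar}_Q(\sigma)=-f_Q\, i_{G,Q}(\sigma)$ recorded above. I would first settle the elements $\sigma\neq 1$ and afterwards deduce the case $\sigma=1$ formally.

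Fix $\sigma\neq 1$ and consider the scheme-theoretic intersection $Z=\Delta\cap\Gamma_\sigma$ inside the relative surface $\mathcal{X}\times_{\Spe A}\mathcal{X}$. Since $\mathcal{X}\to\Spe A$ is smooth and proper and $Z$ is, fibrewise, the fixed locus of $\sigma$ on a curve, $Z$ is closed in a proper scheme and quasi-finite over $\Spe A$, hence finite over $\Spe A$. Its fibre over the closed point is the fixed point scheme of $\sigma$ on $X$, supported at $P$, while its fibre over the generic point is the fixed point scheme of $\sigma$ on the generic fibre, supported at $P_1,\dots,P_s$. By the quoted identification of $i_{G,P}(\sigma)$ with the local multiplicity of $\Delta.\Gamma_\sigma$ at $P\times P$, and because $f_P=1$ makes $k(P)=k$, the degree of $Z_0$ over $k$ equals $i_{G,P}(\sigma)$. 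On the generic fibre a closed point $P_i$ need not be rational over $K=\mathrm{Frac}(A)$, so its contribution to the degree of $Z_\eta$ over $K$ is $[k(P_i):K]\,i_{G,P_i}(\sigma)=f_{P_i}\,i_{G,P_i}(\sigma)$, giving $\deg_K Z_\eta=\sum_{i=1}^s f_{P_i}\,i_{G,P_i}(\sigma)$.

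The decisive step is the equality $\deg_k Z_0=\deg_K Z_\eta$, which is exactly the algebraic equivalence of $\Delta_0,\Gamma_{\sigma,0}$ with $\Delta_\eta,\Gamma_{\sigma,\eta}$ recalled before the statement: intersection numbers are constant along an algebraically equivalent family, the classical principle of conservation of number, applicable since $Z$ is proper over the integral base $\Spe A$. Granting it, I obtain
\[
 i_{G,P}(\sigma)=\sum_{i=1}^s f_{P_i}\,i_{G,P_i}(\sigma),
\]
and substituting into $\mathrm{ar}_Q(\sigma)=-f_Q\,i_{G,Q}(\sigma)$ together with $f_P=1$ gives
\[
 \mathrm{ar}_P(\sigma)=-i_{G,P}(\sigma)=-\sum_{i=1}^s f_{P_i}\,i_{G,P_i}(\sigma)=\sum_{i=1}^s\mathrm{ar}_{P_i}(\sigma),
\]
which is \eqref{equali} for $\sigma\neq 1$. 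The case $\sigma=1$ then requires no further geometry: by definition $\mathrm{ar}_P(1)=f_P\sum_{\sigma\neq 1} i_{G,P}(\sigma)$, and summing over $\sigma\neq 1$ the identities just proved reproduces $\sum_{i=1}^s\mathrm{ar}_{P_i}(1)$.

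The step I expect to be the main obstacle is making $\deg_k Z_0=\deg_K Z_\eta$ rigorous over the merely Noetherian local domain $A$, that is, guaranteeing that $Z$ is flat over $\Spe A$ so that no intersection length is gained or lost as the fixed points of $\sigma$ spread from the single point $P$ to the points $P_1,\dots,P_s$. Since $A$ is local with algebraically closed residue field and the special fibre is smooth, one safe route is to base change along a discrete valuation ring dominating $A$, where finiteness of $Z$ over a regular one dimensional base already forces flatness and equality of the fibre degrees. Throughout such a reduction the residue degrees $f_{P_i}$ must be tracked carefully, since it is precisely this factor that balances the two sides of \eqref{equali}.
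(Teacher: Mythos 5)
Your argument is essentially the paper's own: the proposition is justified there by the single observation that $\Delta$ and $\Gamma_\sigma$ restrict to algebraically equivalent divisors on the special and generic fibres, so the intersection number computing $i_{G,P}(\sigma)$ is conserved, which is precisely your ``decisive step,'' and the residue-degree factors $f_{P_i}$ you track are handled in the paper by the later reduction to the case where the $P_i$ have degree one. Your write-up merely makes explicit the finiteness/flatness of the fixed-point scheme over $\Spe A$ and the formal deduction of the $\sigma=1$ case, both consistent with the paper's intent.
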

This generalizes a result of  J. Bertin \cite{BertinCRAS}. Moreover 
if we set $\sigma=1$ to the above formula we obtain a relation 
for the valuations of the differents 
 in the special and the generic fibre, since the value 
of the Artin's representation at $1$ is the valuation of 
the different \cite[prop. 4.IV,prop. 4.VI]{SeL}.  This observetion 
is equivalent to claim  3.2 in \cite{MatignonGreen98} and is one
 direction of a local 
criterion for good reduction theorem proved in \cite[3.4]{MatignonGreen98},
\cite[sec. 5]{KatoDuke87}.

\subsection{The Artin representation on the generic fibre}
We can assume that after a base change of the family $\mathcal{X} \rightarrow \Spe(A)$
the points $P_i$ at the generic fibre have degree $1$. 
Observe also that 
at the generic fibre the Artin representation can be computed as follows: 
\[
\mathrm{ar}_{Q}(\sigma)=\left\{
\begin{array}{l}
1 \mbox{ if } \sigma(Q)=Q,\\
0 \mbox{ if } \sigma(Q)\neq Q. 
\end{array}
\right.
                                  \]
The set of points $S:=\{P_1,\ldots,P_s\}$ that are the intersections of the ramification 
divisor and the generic fibre are acted on by the group $G$. 
Let $S_k$ be the subset of $S$ fixed by $\Z/p^{n-k}\Z$, i.e.
\[
 P\in S_k \mbox{ if and only if } G(P)=\Z/p^{n-k}\Z.
\]
Let $s_k$ be the order of $S_k$. 
Observe that since for a point $Q$ in the generic fibre  $\sigma(Q)$ and $Q$ have
the same stabilizers (they are conjugate but $G$ is abelian) the sets $S_k$ are 
acted on by $G$. Therefore orders of $S_k$ are $s_k=p^{k}i_k$ where  $i_k$ 
is the number of orbits 
of the action of $G$ on $S_k$.
                                  
Observe that 
\[
 G_{j_k}=\left\{
 \begin{array}{ll}
\Z/p^{n-k}\Z & \mbox{ for } 0 \leq k \leq n-1 \\
 \{1\} & \mbox{ for } k \geq n.
 \end{array}
\right.
\]
An element in $G_{j_k}$ fixes only elements with stabilizers that contain $G_{j_k}$. 
So $G_{j_0}$ fixes only $S_0$, $G_{j_1}$ fixes both $S_0$ and $S_1$ and 
$G_{j_k}$ fixes all elements in $S_0,S_1,\ldots,S_k$. 
So eq. (\ref{equali}) implies that an element $\sigma$ in $G_{j_k}-G_{j_{k+1}}$ 
satisfies $\mathrm{ar}_P(\sigma)=j_k$ and by using equation (\ref{equali}) we arive at 
\[
 j_k=i_0+ p i_1 + \cdots p^k i_k.
\]
This completes the proof of the Hasse-Arf theorem. The argument is illustrated in figure
\ref{picture}.
\begin{figure}[h]
\caption{The horizontal Ramification divisor \label{picture}}
\begin{center}
 \includegraphics[scale=0.6]{./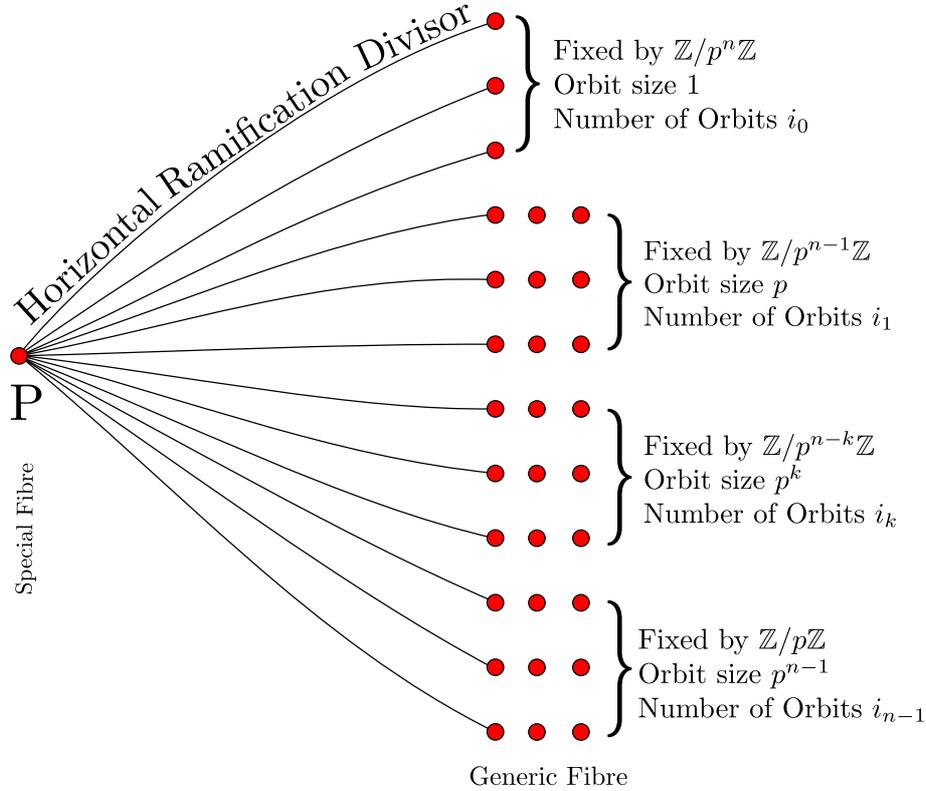}
\end{center}
\end{figure}
                                  
%
%
%
%

 \def\cprime{$'$}
\providecommand{\bysame}{\leavevmode\hbox to3em{\hrulefill}\thinspace}
\providecommand{\MR}{\relax\ifhmode\unskip\space\fi MR }
\providecommand{\MRhref}[2]{%
  \href{http://www.ams.org/mathscinet-getitem?mr=#1}{#2}
}
\providecommand{\href}[2]{#2}

\end{document}